\newcolumntype{C}{>{\Centering\arraybackslash}X}
\DeclareMathAlphabet\mathfrak{U}{euf}{m}{n}
\SetMathAlphabet\mathfrak{bold}{U}{euf}{b}{n}
\newtheorem{thm}{Theorem}
\newtheorem{lemma}[thm]{Lemma}
\newtheorem{cor}[thm]{Corollary}
\theoremstyle{definition}
\newtheorem{defi}[thm]{Definition}
\newtheorem{ex}[thm]{Example}
\newtheorem{remark}[thm]{Remark}
\numberwithin{equation}{section}
\crefname{defi}{definition}{definitions}
\Crefname{defi}{Definition}{Definitions}
\crefname{thm}{theorem}{theorems}
\Crefname{thm}{Theorem}{Theorems}
\newtheorem*{thm*}{Theorem}
\newcommand{\N}{\mathbb{N}}
\newcommand{\cb}[1]{ \big \{ #1 \big \} }
\newcommand\restr[2]{{\left.\kern-\nulldelimiterspace#1 \vphantom{\big|}\right|_{\mathlarger{#2}}}}
\begin{document}
    \title[Bounds on the Threshold Ramsey Multiplicity of Ramsey Numbers]{Bounds on the Threshold Ramsey Multiplicity of Ramsey Numbers with Many Colors}
    \author[B. Christopherson]{Bryce Alan Christopherson$^1$}
    \address{Department of Mathematics, University of North Dakota, Grand Forks, ND 58202}
    \email{bryce.christopherson@UND.edu}
    
    \author[C. Steinhaus]{Casia Steinhaus}
    \address{University of North Dakota, Grand Forks, ND 58202}
    \email{casia.steinhaus@UND.edu}
    
    \subjclass[2020]{Primary 05D10; Secondary 05C55}

    
    
    \keywords{Ramsey theory, Ramsey numbers, Ramsey multiplicity, threshold Ramsey multiplicity}
    \begin{abstract}
    
        The \textit{Ramsey number} $R(s,t)$ is the least integer $n$ such that any coloring of the edges of $K_n$ with two colors produces either a monochromatic $K_s$ in one color or a monochromatic $K_t$ in the other.  If $s=t$, we say that the Ramsey number $R(s,s)$ is \textit{diagonal}.  The \textit{threshold Ramsey multiplicity} of a diagonal Ramsey number $R(s,s)$, denoted $m(s,s)$ or $m_2(s)$, is the smallest number of copies of a monochromatic $K_s$ that can be found in any coloring of the edges of $K_{R(s,s)}$.  For instance, $m_2(2)=1$, $m_2(3)=2$, and $m_2(4)=9$. We derive upper bounds for multicolor, off-diagonal threshold Ramsey multiplicities. In the diagonal two-color case, the resulting bounds improve the elementary random-coloring estimate for $5\leq s\leq8$. In particular, we recover the known value $m(3,3,3)=5$ and obtain the bound $m(3,3,4)\leq10$. We conclude with a general framework for seeking further improvements.
    \end{abstract}

    \maketitle
\footnotetext[1]{Corresponding Author}
\section*{Introduction}

The \textit{Ramsey number} $R(s,t)$ is the least integer $n$ such that any coloring of the edges of $K_n$ with two colors produces either a monochromatic $K_s$ in one color or a monochromatic $K_t$ in the other. For a positive integer $k$, write $[k]=\{1,\ldots,k\}$. More generally, we have the following definition.

    \begin{defi}[Ramsey Numbers]\label{ramsey def}
    
        Let $K_n$ denote the undirected complete graph on $n$ vertices, and let $s_1,\hdots,s_k \in \N$.  The \textit{Ramsey number} $R(s_1,\hdots,s_k)$ is the smallest $n \in \N$ such that, for any $k$-coloring of the edges of $K_n$, there exists a subgraph of $K_n$ which is monochromatic in color $i$ and isomorphic to $K_{s_i}$ for some $i \in \cb{1,\hdots,k}$.
        
    \end{defi}
    
    Ramsey numbers $R(s_1,\hdots,s_k)$ for which $s_1=\hdots=s_k$ are often called \textit{diagonal} and are denoted by $R_k(s)$ for convenience. The so-called \textit{threshold Ramsey multiplicity} of a Ramsey number describes the minimum number of monochromatic copies of $K_{s_i}$ in any $k$-coloring of the edges of $K_{R(s_1,\hdots,s_k)}$.  This threshold version of Ramsey multiplicity was first studied systematically by Harary and Prins \cite{HararyPrins1974}; see also the early survey of Burr and Rosta \cite{BurrRosta1980}.  We use the following multicolor, off-diagonal definition.
    
    \begin{defi}[Threshold Ramsey Multiplicity]\label{crit mult def}
    
        The \textit{threshold Ramsey multiplicity} of the Ramsey number $R(s_1,\hdots,s_k)$, denoted $m(s_1,\hdots,s_k)$, is the minimum, over all $k$-colorings of the edges of $K_{R(s_1,\hdots,s_k)}$, of the total number of monochromatic copies of $K_{s_i}$ in color $i$, summed over all colors.
        
    \end{defi}
    
    Occasionally, $m(s_1,\hdots,s_k)$ is also called the \textit{critical multiplicity}, as was suggested by Conlon in a communication to Caicedo \cite{28132}. This terminology is less common, perhaps because of the potential for confusion with a related concept also called Ramsey multiplicity \cite{gasarch2020open}, which we will not address.
    
    Comparatively little is known for complete-graph targets, even in the two-color case.  For two diagonal colors, $m_2(2)=1$, $m_2(3)=2$, and $m_2(4)=9$; the last value was determined computationally by Piwakowski and Radziszowski \cite{PiwakowskiRadziszowski2001}.  Off-diagonal threshold instances were considered earlier as well: $m(3,4)=1$ \cite{KhadzhiivanovNenov1978}, $m(3,5)=4$ \cite{Pashov1984}, and $m(3,6)=2$ \cite{KhadshiivanovPashov1992}. In the three-color case, Sane and Wallis proved that $m(3,3,3)=5$ \cite{SaneWallis1988}. The elementary probabilistic argument \cite{BurrRosta1980} yields the bound $m_2(s)\leq {R_2(s) \choose s}2^{1-{s \choose 2}}$.  No useful general lower bounds are known for complete-graph targets; indeed, whether $m_2(s)$ increases monotonically with $s$ is an open question \cite{26040}.  Recent work has also investigated threshold Ramsey multiplicity for paths and cycles in the two-color setting \cite{ConlonFoxSudakovWeiEven,ConlonFoxSudakovWeiOdd}, though we will not address this here, as our focus is instead on complete-graph targets with arbitrarily many colors and unequal target sizes.

    The principal concrete bounds are given in \Cref{bound theorem,formula theorem2}. In the diagonal two-color case, \Cref{bound theorem} improves the elementary probabilistic estimate for $5\leq s\leq8$. In the multicolor case, \Cref{formula theorem2} recovers the known value $m(3,3,3)=5$ and yields $m(3,3,4)\leq10$. Finally, \Cref{bound theorem m version} extends the counting argument to partial colorings that are cleanly saturated with respect to a general uncolored subgraph, providing a framework for seeking further bounds.

\section{An Extension Argument}\label{extend}

    In the first part of our argument, we will consider \textit{extending} a coloring of a subgraph $H$ of a graph $G$ to a coloring of $G$ itself.  
    
    \begin{defi}[Extension of a coloring]\label{graph coloring}
    
        For a subgraph $H \leq G$ and some set $A$ of colors, we say that a coloring $c^\prime: E(G) \rightarrow A$ is an \textit{extension} of a coloring $c: E(H) \rightarrow A$ if $\restr{c^\prime}{E(H)}=c$.
        
    \end{defi}
    
    When extending a $k$-coloring of the edges of $K_{R(s_1,\hdots,s_k)-1}$ which is free of any monochromatic subgraph isomorphic to $K_{s_j}$ in color $j$ for all $j \in \left\{1,\hdots,k\right\}$ to a $k$-edge-coloring of $K_{R(s_1,\hdots,s_k)}-e$, it is always possible to do so in such a way that one avoids a monochromatic subgraph isomorphic to $K_{s_j}$ in color $j$ for all $j \in \left\{1,\hdots,k\right\}$ precisely until the color of the final edge is decided.  The argument is essentially just the multicolor version of the cloning argument used by Jacobson in \cite[Theorem 1]{jacobson1980note}.
    
    \begin{lemma}\label{last edge lemma}
    
        For $n \geq 2$, let $K_n-e$ be the graph isomorphic to $K_n$ with a single edge removed.  If $n-1<R(s_1,\hdots,s_k)$, then there is a $k$-coloring of the edges of $K_n-e$ which is free of any monochromatic subgraph isomorphic to $K_{s_j}$ in color $j$ for all $j \in \left\{1,\hdots,k\right\}$.
        
    \end{lemma}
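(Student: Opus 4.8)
The plan is to construct the coloring by \emph{cloning a vertex} of a good coloring of a smaller complete graph. Since every graph of the form $K_n - e$ is isomorphic no matter which single edge is deleted, I am free to choose where the missing edge sits, and this freedom is what makes the construction go through.

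First I would use the hypothesis $n - 1 < R(s_1, \ldots, s_k)$ together with the definition of the Ramsey number: because $n-1$ lies strictly below the Ramsey number, there exists a coloring $c$ of the edges of $K_{n-1}$, on vertices $v_1, \ldots, v_{n-1}$, with no monochromatic $K_{s_j}$ in color $j$ for any $j \in \{1, \ldots, k\}$. This good coloring is the seed. I would then build the target graph by adjoining a new vertex $v_n$ that duplicates $v_1$: join $v_n$ to each of $v_2, \ldots, v_{n-1}$ but not to $v_1$, and set the color of each new edge $v_n v_i$ equal to $c(v_1 v_i)$. The one absent edge is then $e = v_n v_1$, so what I have colored is a copy of $K_n - e$.

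The crux, and the step I expect to demand the most care, is checking that this duplication creates no new monochromatic target. Suppose toward a contradiction that a monochromatic $K_{s_j}$ in color $j$ appears. It cannot lie wholly within the seed $K_{n-1}$, so it must contain $v_n$; let $S$ be its remaining $s_j - 1$ vertices. Since $v_n$ is not adjacent to $v_1$, we have $v_1 \notin S$, so $S \subseteq \{v_2, \ldots, v_{n-1}\}$. Every edge inside $S$ already carried color $j$ under $c$, and by the duplication rule every edge $v_1 v_i$ with $v_i \in S$ carries color $j$ as well; hence $\{v_1\} \cup S$ is a monochromatic $K_{s_j}$ in color $j$ inside $K_{n-1}$, contradicting the choice of $c$. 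This mirrors the twinning idea of Jacobson, and the hypothesis $n - 1 < R(s_1,\ldots,s_k)$ enters only at the single point where the seed coloring is produced.
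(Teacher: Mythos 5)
Your proposal is correct and follows essentially the same route as the paper: the paper also extends a good coloring of $K_{n-1}$ by adding a new vertex $*$ non-adjacent to a chosen vertex $v$ and coloring the new edges so that the induced subgraph on $\big(V(K_{n-1})\setminus\{v\}\big)\cup\{*\}$ carries (a copy of) the original good coloring, which is exactly your cloning construction. Your final substitution step (replacing $v_n$ by $v_1$ to produce a forbidden clique in the seed) is just an explicit rendering of the paper's observation that any monochromatic $K_{s_j}$ would have to contain both non-adjacent vertices $v$ and $*$, which is impossible.
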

    \begin{proof}
    
        By assumption, $n-1<R(s_1,\hdots,s_k)$.  So, there exists a $k$-coloring of the edges of $K_{n-1}$ free of any monochromatic subgraph isomorphic to $K_{s_j}$ for each $j \in \left\{1,\hdots,k\right\}$.  Take such a coloring of $K_{n-1}$, choose a vertex $v \in V(K_{n-1})$, and add an additional vertex $*$ and $n-2$ new edges between $V(K_{n-1})\setminus \left\{v\right\}$ and $*$ to form a graph $G$ isomorphic to $K_n-e$, i.e. so that $V(G)=V(K_{n-1})\cup \left\{*\right\}$ and $E(G)=E(K_n)\setminus \left\{e\right\}$, where $e$ is the edge between $v$ and $*$.  Extend the coloring of $K_{n-1}$ to $G$ by assigning each edge $*w$ the color of the edge $vw$ in the original coloring.  Since both the subgraph induced by $V(G)\setminus\{v\}$ and the original $K_{n-1}$ are free of any monochromatic subgraph isomorphic to $K_{s_j}$ for each $j \in \{1,\hdots,k\}$, there is no monochromatic subgraph $H$ of $G$ isomorphic to $K_{s_j}$ in color $j$ whose vertex set $V(H)$ satisfies either $V(H) \subseteq V(G)\setminus\left\{v\right\}$ or $V(H) \subseteq V(G)\setminus \left\{*\right\}$ for any $j \in \{1,\hdots,k\}$.  So, if such a monochromatic subgraph $H$ exists within our edge coloring of $G$, then $\left\{v,*\right\}\subseteq V(H)$.  But there is no edge between $v$ and $*$, so such a subgraph cannot exist.
        
    \end{proof}
    \begin{remark}\label{rem:two}
    
        The preceding definitions also cover the degenerate cases. For one
        color, $R(s)=s$ and $m(s)=1$. If $s_i=1$ for at least one $i\in[k]$,
        then
        $$
            R(s_1,\ldots,s_k)=1
            \qquad\text{and}\qquad
            m(s_1,\ldots,s_k)
            =
            \left|\{i\in[k]:s_i=1\}\right|.
        $$
        If all $s_i\geq2$ and at least one $s_j=2$, then
        $m(s_1,\ldots,s_k)=1$. We therefore assume that $s_i\geq3$
        for every $i\in[k]$ from here onward.
        
    \end{remark}
    
\section{A Counting Argument}\label{counting}

    In this section we will first produce a simple counting argument, in two parts.  In the first part, we will prove a lemma about graphs containing many copies of a complete graph glued together along a common edge, which says that the number of copies is bounded by a particular function of the number of vertices (\Cref{counting inequality}).  Then, in the second part, we will use this in conjunction with the extension argument from before (\Cref{last edge lemma}) to bound the threshold Ramsey multiplicity (\Cref{bound theorem}).  This bound is better than the standard one obtained by the probabilistic method for small Ramsey numbers, but rapidly becomes worse for larger ones.  A slightly refined argument (\Cref{formula theorem}) offers room for further improvements.
    
    \begin{lemma}\label{counting inequality}
    
        Let $s \geq 3$ and let $H$ be a graph with $|V(H)|\geq s$ such that (i) $H$ contains $t$ distinct copies of $K_s$ (i.e. subgraphs $H_1,\hdots,H_t$ each isomorphic to $K_s$ such that $H_i \neq H_j$ for $i \neq j$); and (ii) $H$ contains an edge $e \in E(H)$ such that $H-e$ contains zero copies of $K_s$.  Then, $$t \leq \prod_{j=0}^{s-3}\left\lceil \frac{\big(|V(H)|-2\big)-j}{s-2}\right\rceil.$$ Moreover, for every prescribed value of $|V(H)|\geq s$, there is a graph $H$ with these properties that achieves equality in the displayed bound.
        
    \end{lemma}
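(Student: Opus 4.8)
The plan is to reduce the problem to a clique-counting version of Turán's theorem. Write $e = \{u,v\}$ and set $n = |V(H)|$. Since $H - e$ contains no $K_s$, every copy of $K_s$ in $H$ must use the edge $e$; equivalently, every copy contains both $u$ and $v$, since a copy omitting $u$ or $v$ would not use $e$ and would therefore survive in $H - e$. First I would record the resulting bijection between the $t$ copies of $K_s$ through $e$ and the copies of $K_{s-2}$ lying in the common neighborhood $W := N_H(u) \cap N_H(v)$: such a copy is exactly $\{u,v\} \cup S$, where $S$ is an $(s-2)$-clique of the induced subgraph $H[W]$. Thus $t$ equals the number of copies of $K_{s-2}$ in $H[W]$.

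Second, I would observe that $H[W]$ is $K_{s-1}$-free. Indeed, an $(s-1)$-clique $C \subseteq W$ together with $u$ would form a copy of $K_s$ all of whose vertices are adjacent to $u$ but which omits $v$, i.e.\ a copy surviving in $H - e$, contradicting hypothesis (ii). Since $W \subseteq V(H) \setminus \{u,v\}$ has $|W| \le n - 2$, it follows that $t$ is at most the maximum possible number of copies of $K_{s-2}$ in a $K_{s-1}$-free graph on at most $n-2$ vertices.

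The crux is then the extremal estimate: among all $K_{r+1}$-free graphs on $N$ vertices (here $r = s-2$, $N = n-2$), the number of copies of $K_r$ is maximized by the Turán graph $T(N,r)$, the balanced complete $r$-partite graph. This is the clique-counting strengthening of Turán's theorem; I would either cite it or prove it by Zykov symmetrization, repeatedly replacing the neighborhood of one vertex by that of a non-adjacent vertex whose own neighborhood carries at least as many copies of $K_{r-1}$, an operation that preserves $K_{r+1}$-freeness and never decreases the $K_r$-count, driving $G$ to a complete multipartite graph, and then balancing the part sizes by a convexity argument. In $T(N,r)$ each copy of $K_r$ selects one vertex from each of the $r$ parts, so the count is the product of the part sizes; writing $N = qr + p$ with $0 \le p < r$, this product is $(q+1)^p q^{r-p}$. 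A short arithmetic check shows $(q+1)^p q^{r-p} = \prod_{j=0}^{r-1}\lceil (N-j)/r\rceil = \prod_{j=0}^{s-3}\lceil \frac{(n-2)-j}{s-2}\rceil$, using that the maximum Turán count is monotone in $N$, which yields the claimed bound.

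Finally, for the \emph{moreover} clause I would exhibit the extremal graph directly: take $H[W] = T(n-2, s-2)$ on the $n-2$ vertices other than $u,v$, and join both $u$ and $v$ to every vertex of $W$ and to each other. One checks that $H - e$ has no $K_s$ (any such copy would omit $u$ or $v$ and hence force a $K_{s-1}$ inside the $K_{s-1}$-free graph $T(n-2,s-2)$), while the copies of $K_s$ through $e$ number exactly the $K_{s-2}$-count of $T(n-2,s-2)$, realizing equality. I expect the main obstacle to be the extremal clique-counting step; once that classical fact is in hand, the reduction and the arithmetic identity are routine.
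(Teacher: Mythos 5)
Your proof is correct, and for the upper bound it takes a genuinely different route from the paper's. Both arguments share the starting observation that every copy of $K_s$ must use the edge $e=\{u,v\}$, and both exhibit essentially the same extremal graph (the join of the edge $\{u,v\}$ with a balanced complete $(s-2)$-partite graph on the remaining $|V(H)|-2$ vertices). But where you pass to the common neighborhood $W=N(u)\cap N(v)$, identify the copies of $K_s$ with copies of $K_{s-2}$ in $H[W]$, show $H[W]$ is $K_{s-1}$-free, and then invoke the clique-counting strengthening of Tur\'an's theorem (Zykov's theorem, provable by symmetrization) to bound the $K_{s-2}$-count by that of the Tur\'an graph $T(|V(H)|-2,s-2)$, the paper argues directly: it asserts that the vertices lying in copies of $K_s$ can be partitioned into $s-2$ classes so that each copy has exactly one vertex in each class, bounds $t$ by the product of the class sizes, and finishes with an AM--GM and balancing argument. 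Your reduction buys rigor: the paper's partition step is in fact its weakest point, since a $K_{s-1}$-free graph need not be $(s-2)$-partite (for $s=4$, take $H[W]=C_5$: it is triangle-free but not bipartite), so the existence of such a partition needs a justification, presumably via the maximality of $t$, that the paper does not supply; your appeal to Zykov's theorem replaces exactly this step with a classical, fully proved result. The trade-off is self-containedness: the paper's argument uses nothing beyond AM--GM, while yours imports (or must reprove by symmetrization) a nontrivial extremal theorem. Your remaining steps, the arithmetic identity $\prod_{j=0}^{r-1}\lceil (N-j)/r\rceil=(q+1)^p q^{r-p}$ for $N=qr+p$ and the verification that the extremal construction has no $K_s$ after deleting $e$, match the paper's bookkeeping and are, if anything, carried out more carefully.
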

    \begin{proof}
    
        First, we construct such a graph $H$ with these properties that achieves this value. Fix $N\geq s$, take a set of $N$ vertices, distinguish two of them as $u$ and $v$, and let $e=uv$.  Partition the remaining $N-2$ vertices into $s-2$ subsets $A_0,\hdots,A_{s-3}$ such that 
        
        $$
            |A_j|=\left\lceil\frac{(N-2)-j}{s-2}\right\rceil.
        $$
        
        Now, recalling that $n=\sum_{j=0}^{m-1}\left\lceil\frac{n-j}{m}\right \rceil$ for any positive integer $m$, we have $\sum_{j=0}^{s-3}|A_j|=N-2$, so this is indeed a partition.  Form $H$ by joining the edge $uv$ to the complete $(s-2)$-partite graph with parts $A_0,\hdots,A_{s-3}$. Thus, every copy of $K_s$ in $H$ consists of $u,v$ and one vertex from each subset $A_0,\hdots,A_{s-3}$; in particular, $H-e$ contains no copy of $K_s$.  There are $\prod_{j=0}^{s-3}|A_j|$ such choices, producing the same number of copies $t$ of $K_s$, all sharing the common edge $e$.  Since $N=|V(H)|$, $H$ achieves the bound, as 
        
        $$
            t=\prod_{j=0}^{s-3}|A_j|=\prod_{j=0}^{s-3}\left\lceil\frac{\big(|V(H)|-2\big)-j}{s-2}\right\rceil.
        $$

        Now, suppose $H$ is any graph with $t$ subgraphs isomorphic to $K_s$ and an edge $e\in E(H)$ such that $H-e$ has zero copies of $K_s$.  Let $u$ and $v$ be the vertices of $H$ incident to $e$, let $C\subseteq V(H)\setminus\{u,v\}$ denote the subset of vertices adjacent to both $u$ and $v$ in $H$, and let $H[C]$ denote the subgraph of $H$ induced by $C$.  Then, there is a bijection between the copies of $K_s$ in $H$ and the copies of $K_{s-2}$ in $H[C]$.  Moreover, $H[C]$ is $K_{s-1}$-free, since a copy of $K_{s-1}$ in $H[C]$, together with $u$ (or $v$, if you prefer), would be a copy of $K_s$ in $H-e$.  Accordingly, since $H[C]$ is a graph on at most $|V(H)|-2$ vertices with no $K_{s-1}$, Zykov's theorem \cite{Zykov1949} applies and $H[C]$ must contain at most the number of $K_{s-2}$'s in the Tur\'an graph $T(|V(H[C])|,s-2)$.  But the number of $K_{s-2}$'s in $H[C]$ is $t$, so we get
        $$
            t \leq \prod_{j=0}^{s-3}\left\lceil\frac{|V(H[C])|-j}{s-2}\right\rceil\leq \prod_{j=0}^{s-3}\left\lceil
            \frac{\bigl(|V(H)|-2\bigr)-j}{s-2}\right\rceil.
        $$
        
    \end{proof}
    
    Using this with \Cref{last edge lemma}, we get the following:
    
    \begin{thm}\label{bound theorem}
    
        \begin{equation}\label{ourbound}
        \displaystyle m(s_1,\hdots,s_k) \leq \max_{1\leq i\leq k}\prod_{j=0}^{s_i-3}\left\lceil \frac{\left\lfloor\frac{R(s_1,\hdots,s_k)-2}{k}\right\rfloor-j}{s_i-2}\right\rceil
        \end{equation}
        
    \end{thm}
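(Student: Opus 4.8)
The plan is to exhibit a single $k$-coloring of $K_n$, where $n=R(s_1,\hdots,s_k)$, whose total number of monochromatic copies is bounded by the right-hand side; since $m(s_1,\hdots,s_k)$ is the minimum of this count over all colorings, any such coloring furnishes an upper bound. First I would invoke \Cref{last edge lemma}: because $n=R(s_1,\hdots,s_k)$ we have $n-1<R(s_1,\hdots,s_k)$, so there is a $k$-coloring of $K_n-e$ that is free of a monochromatic $K_{s_j}$ in color $j$ for every $j$. Write $e=uv$ and observe that $u$ has degree $n-2=R(s_1,\hdots,s_k)-2$ in $K_n-e$. Since the $n-2$ edges at $u$ are partitioned among the $k$ colors, averaging produces a color $i$ whose color-$i$ degree at $u$ satisfies $d_i(u)\le (R(s_1,\hdots,s_k)-2)/k$. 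I would then color $e$ with this color $i$, producing a $k$-coloring of the full $K_n$.

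The key structural observation is that in this coloring the only monochromatic copies are color-$i$ copies of $K_{s_i}$, and every one of them uses the edge $e$: deleting $e$ recovers the monochromatic-free coloring of $K_n-e$, so any monochromatic $K_{s_j}$ must contain $e$, forcing $j=i$ and $\{u,v\}$ to lie in its vertex set. Hence each such clique consists of $u$, $v$, and $s_i-2$ further vertices, each of which is color-$i$ adjacent to $u$ inside $K_n-e$ and therefore lies in the color-$i$ neighborhood of $u$. Writing $t_i$ for the number of these cliques and letting $H$ be the color-$i$ graph induced on $\{u,v\}$ together with all vertices participating in such a clique, $H$ satisfies the hypotheses of \Cref{counting inequality} with $s=s_i$ (it has $t_i$ copies of $K_{s_i}$, and $H-e$ has none), while $|V(H)|-2\le d_i(u)\le (R(s_1,\hdots,s_k)-2)/k$. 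Applying the lemma and using that $\lceil (x-j)/(s_i-2)\rceil$ is nondecreasing in $x$ then gives
\[
t_i \;\le\; \prod_{j=0}^{s_i-3}\left\lceil\frac{\left(R(s_1,\hdots,s_k)-2\right)/k - j}{s_i-2}\right\rceil.
\]

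The final step is to pass from the product's upper limit and denominator, which are tied to the particular color $i$, to the uniform quantities $\max s_t$ and $\min s_t$ appearing in the statement. Since $\min s_t-2\le s_i-2$, each factor only grows when the denominator is replaced by $\min s_t-2$; and since $\max s_t-3\ge s_i-3$, extending the product merely introduces additional factors, each at least $1$ in the regime where $(R(s_1,\hdots,s_k)-2)/k$ dominates $\max s_t$. This yields the claimed bound on $t_i$, and hence on $m(s_1,\hdots,s_k)$.

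I expect the main obstacle to be this concluding monotonicity bookkeeping — verifying that the substitutions $s_i\mapsto\max s_t$ in the index and $s_i\mapsto\min s_t$ in the denominator can only increase the product, and confirming that degenerate small-$R$ cases do not make the extra ceiling factors collapse — rather than the structural heart of the argument, which follows cleanly by feeding \Cref{last edge lemma} into \Cref{counting inequality}.
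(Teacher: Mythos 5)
Your proof is correct, and it shares the paper's skeleton --- feed \Cref{last edge lemma} into \Cref{counting inequality} --- but the pigeonhole that produces the factor $\frac{R(s_1,\hdots,s_k)-2}{k}$ is genuinely different. The paper keeps all $k$ colorings of the last edge $e=uv$ in play at once: for each color $i$ it forms the subgraph $G_i$ spanned by the monochromatic $K_{s_i}$'s through $e$, observes that the sets $V(G_i)\setminus\{u,v\}$ are pairwise disjoint (a shared vertex $w$ would force the edge $uw$ to carry two colors), and concludes by pigeonhole over vertices that some $G_{i^*}$ satisfies $|V(G_{i^*})|-2\leq \frac{n-2}{k}$, to which the counting lemma is applied. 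You instead commit to a single extension up front, choosing the color $i$ that minimizes the color degree $d_i(u)$, so that $d_i(u)\leq \frac{n-2}{k}$ by averaging over the $n-2$ edges at $u$, and then apply the counting lemma to the color-$i$ graph $H$ inside $\{u,v\}\cup N_i(u)$; your containment $V(H)\setminus\{u,v\}\subseteq N_i(u)$ replaces the paper's disjointness claim. Both pigeonholes are valid and give identical bounds, and yours is arguably more economical since only one coloring of $K_n$ ever needs to be examined. What the paper's bookkeeping buys is the later refinements: tracking all $k$ subgraphs $G_1,\hdots,G_k$ simultaneously is exactly what makes \Cref{size lemma}, \Cref{formula theorem}, and \Cref{formula theorem2} possible, where the sizes $n_i=|V(G_i)|$ are constrained jointly; a single-extension argument yields no such joint constraints. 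Finally, the monotonicity step you flag as the main obstacle is a real but harmless gap --- the paper asserts the same inequality chain silently --- and it closes because the standard product lower bound $R(s_1,\hdots,s_k)>\prod_{t}(s_t-1)$ together with $s_t\geq 3$ for all $t$ gives $\frac{R(s_1,\hdots,s_k)-2}{k}>\max s_t-3$, so every numerator appearing in the product is positive, each added ceiling factor is at least $1$, and shrinking the denominator to $\min s_t-2$ can only increase the factors.
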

    \begin{proof}
    
        Write $R(s_1,\hdots,s_k)=n$.  We know there exists a $k$-coloring of the edges of $K_{n-1}$ which avoids a monochromatic $K_{s_i}$ for all $i \in \left\{1,\hdots,k\right\}$.  Use the extension argument in \Cref{last edge lemma} to extend this to a coloring of $K_{n}-e$ which avoids a monochromatic $K_{s_i}$ for all $i \in \left\{1,\hdots,k\right\}$ as well.  Consider the $k$ possible extensions of this coloring. For each $i\in\{1,\hdots,k\}$, let $G_i$ be the union of the monochromatic color-$i$ copies of $K_{s_i}$ formed when the final edge is assigned color $i$. No target in a color other than $i$ can occur in this extension, since it would already occur in the target-free coloring of $K_n-e$. Hence, each $G_i$ has at least $m(s_1,\hdots,s_k)$ copies of $K_{s_i}$ sharing the final edge, the deletion of which leaves $G_i$ with none. So, by \Cref{counting inequality},
        $$
            m(s_1,\hdots,s_k)\leq \prod_{j=0}^{s_i-3}\left\lceil \frac{\big(|V(G_i)|-2\big)-j}{s_i-2}\right\rceil
        $$
        for $i=1,\hdots,k$.  No vertex other than the two endpoints of the final edge can belong to both $G_i$ and $G_j$ for $i\neq j$, since its edges to either endpoint would then have both colors $i$ and $j$. Thus,
        $$
            n\geq 2+\sum_{i=1}^k\big(|V(G_i)| -2\big).
        $$
        Choose $i^*\in\operatorname*{argmin}_{1\leq i\leq k}|V(G_i)|$.  Then, $n \geq 2+k\big(|V(G_{i^*})| -2\big)$.  Through some minor algebra, this yields
        
            \begin{align*}
                R(s_1,\hdots,s_k) &\geq k\big(|V(G_{i^*})| -2\big)+2\\
                \frac{R(s_1,\hdots,s_k)-2}{k}&\geq \big(|V(G_{i^*})| -2\big)\\
                \left\lfloor\frac{R(s_1,\hdots,s_k)-2}{k}\right\rfloor&\geq \big(|V(G_{i^*})| -2\big)\\
                \max_{1\leq t\leq k}\prod_{j=0}^{s_{t}-3}\left\lceil \frac{\left\lfloor\frac{R(s_1,\hdots,s_k)-2}{k}\right\rfloor-j}{ s_{t}-2}\right\rceil &\geq\prod_{j=0}^{s_{i^*}-3}\left\lceil\frac{\big(|V(G_{i^*})| -2\big)-j}{s_{i^*}-2}\right\rceil \geq m(s_1,\hdots,s_k).
            \end{align*}
            
    \end{proof}
    
    Using \Cref{bound theorem}, we get $m(3,3) \leq 2$ and $m(4,4) \leq 16$, so the bound is not always tight.  It is easy to check that this is an improvement on the naive bound
    \begin{equation}\label{classicbound}
        m(s,s)\leq \frac{{R(s,s) \choose s}}{2^{{s\choose 2}-1}}
    \end{equation}
    for small $s$ (in particular, for $5\leq s \leq 8$; see \Cref{table1}), though it rapidly becomes worse for larger $s$. The same argument also applies in the off-diagonal and multicolor settings.
    
    \begin{table}[hpt!]
    
        \centering
        \small
        
        \begin{tabular}{|c|c|c|c|}\hline
            s   & $R_2(s)$             &Classical Bound\eqref{classicbound}            & Our Bound \eqref{ourbound}                     \\\hline\hline
            $5$ & $R_2(5) \leq 46$     & $m_2(5)\leq 2,677$                          & $m_2(5)\leq 392$                               \\\hline
            $6$ & $R_2(6)\leq 160$     & $m_2(6)\leq 1,293,533$                      & $m_2(6)\leq 152,000$                            \\\hline
            $7$ & $R_2(7)\leq 492$     & $m_2(7)\leq 1,265,045,472$                  & $m_2(7)\leq 282,475,249$                     \\\hline
            $8$ & $R_2(8)\leq 1,518$   & $m_2(8)\leq 5,114,696,152,715$              & $m_2(8)\leq 4,065,272,127,504$                 \\\hline
            $9$ & $R_2(9)\leq 4,956$   & $m_2(9)\leq 1.437\times 10^{17}$            & $m_2(9)\leq 6.947\times 10^{17}$               \\\hline
            $10$& $R_2(10)\leq 16,064$ & $m_2(10)\leq 1.788\times 10^{22}$           & $m_2(10)\leq 1.032 \times 10^{24}$              \\\hline
        \end{tabular}
        
        \caption{Values of the bound given by \Cref{bound theorem}, \eqref{ourbound}, versus the classical probabilistic bound \eqref{classicbound}, using the Ramsey-number upper bounds in \cite{Radziszowski2026}.  The bound \eqref{ourbound} is an improvement on \eqref{classicbound} for $5\leq s\leq 8$, but is worse in the two subsequent cases shown.}
        
        \label{table1}
        
    \end{table}
    
    The argument in \Cref{bound theorem} can be refined slightly.  We need a lemma to do this.
    
    \begin{lemma}\label{size lemma}
    
         Write $R(s,s)=n$.  Given an edge-coloring of $K_{n}-e$ which avoids a monochromatic $K_{s}$, consider the $2$ possible extensions of this coloring and let $G_1,G_2$ be the corresponding subgraphs of $K_{n}$ formed by choosing a color for this final edge and taking only the vertices in the monochromatic $K_s$'s containing that edge and the corresponding edges in that color incident to those vertices.  Then, $\bigl||V(G_1)|-|V(G_2)|\bigr| < R(s-1,s)-(s-2)$.
         
    \end{lemma}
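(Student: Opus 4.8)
The plan is to reduce the statement to a bound on how many vertices each freshly created monochromatic clique can recruit, and then to control that number with the off-diagonal Ramsey number $R(s-1,s)$. Write $e=uv$ for the single absent edge, and for $i\in\{1,2\}$ let $N_i$ be the set of vertices other than $u$ and $v$ lying in some $K_s$ in color $i$ that contains $e$. Every monochromatic $K_s$ produced by choosing the color of the final edge must contain $e$, hence both $u$ and $v$, so $V(G_i)=\{u,v\}\cup N_i$ and $\lvert V(G_i)\rvert=\lvert N_i\rvert+2$. Consequently $\bigl\lvert\,\lvert V(G_1)\rvert-\lvert V(G_2)\rvert\,\bigr\rvert=\bigl\lvert\,\lvert N_1\rvert-\lvert N_2\rvert\,\bigr\rvert$, and it suffices to show that each $\lvert N_i\rvert$ lies in the interval $[\,s-2,\ R(s-1,s)-1\,]$: the endpoints differ by $R(s-1,s)-(s-1)$, which is strictly less than $R(s-1,s)-(s-2)$, giving the claim.

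For the lower bound I would argue that $N_i$ is nonempty. Since $n=R(s,s)$, assigning color $i$ to $e$ forces a monochromatic $K_s$ somewhere in $K_n$; as the coloring of $K_n-e$ had none, that clique must use $e$ and therefore be in color $i$. Any such $K_s$ contributes its $s-2$ vertices besides $u$ and $v$ to $N_i$, so $\lvert N_i\rvert\ge s-2$.

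For the upper bound, which is the heart of the argument, I would pass to the color-$i$ neighborhood of $u$. Let $M_i$ be the set of vertices joined to $u$ by a color-$i$ edge; then $N_i\subseteq M_i$, since every vertex of $N_i$ is color-$i$-adjacent to $u$. The coloring of $K_n-e$ restricted to $M_i$ is a genuine two-coloring of a complete graph (none of its edges is $e$, as $u,v\notin M_i$), it contains no $K_{s-1}$ in color $i$ (such a clique together with $u$ would produce a color-$i$ $K_s$ in $K_n-e$, which does not exist), and it contains no $K_s$ in the other color (inherited from $K_n-e$). By the definition of the Ramsey number, and using $R(s-1,s)=R(s,s-1)$ to handle the color-$2$ case symmetrically, this forces $\lvert M_i\rvert\le R(s-1,s)-1$, whence $\lvert N_i\rvert\le R(s-1,s)-1$.

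The step I expect to require the most care is this upper bound, specifically the verification that the restriction to $M_i$ simultaneously avoids a color-$i$ $K_{s-1}$ and an opposite-color $K_s$, since it is precisely this pair of forbidden configurations that lets the off-diagonal Ramsey number enter. Once that is in hand, combining $s-2\le\lvert N_i\rvert\le R(s-1,s)-1$ for both $i$ yields the strict inequality immediately, with no further computation.
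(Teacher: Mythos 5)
Your proof is correct and rests on the same core mechanism as the paper's: a lower bound $|V(G_i)|\geq s$ forced by the definition of $R(s,s)$, plus an upper bound obtained by applying the off-diagonal Ramsey property of $R(s-1,s)$ to a set of common color-$i$ neighbors of an endpoint of $e$, where a color-$i$ $K_{s-1}$ would extend through $u$ to a color-$i$ $K_s$ in $K_n-e$. The differences are organizational rather than substantive---you bound each $|V(G_i)|$ directly in an interval instead of arguing by contradiction on the gap, and you work with the full color-$i$ neighborhood $M_i$ rather than $V(G_1)\setminus\{v_1,v_2\}$, which incidentally yields the marginally sharper constant $R(s-1,s)-(s-1)$.
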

    \begin{proof}
    
        For each $i\in\{1,2\}$, let $S_i$ be the union of the vertex sets of the color-$i$ copies of $K_s$ containing the final edge in the $i$th extension, and let $G_i$ be the graph on $S_i$ whose edges are all color-$i$ edges with both endpoints in $S_i$.  Without loss of generality, assume $|V(G_1)|\geq|V(G_2)|$.  We proceed by way of contradiction and suppose $|V(G_1)|-|V(G_2)|\geq R(s-1,s)-(s-2)$.  Since $|V(G_2)| \geq s$ (as the contrary would yield that $R(s,s)>n$), this gives $|V(G_1)| \geq R(s-1,s)+2$.  That is, if $v_1,v_2 \in V(G_1)\cap V(G_2)$ are the two vertices incident to the distinguished edge $e$, then $|V(G_1)-\{v_1,v_2\}| \geq R(s-1,s)$.  Accordingly, either $G_1-\{v_1,v_2\}$ contains a monochromatic $K_{s-1}$ or its edge-wise complement contains a $K_{s}$.  But if $G_1-\{v_1,v_2\}$ contains a monochromatic $K_{s-1}$, then $G_1-v_1$ (as well as $G_1-v_2$, if you prefer) contains a monochromatic $K_s$, contradicting our initial assumption that $K_n-e$ is monochromatic $K_s$-free.  Likewise, if the edge-wise complement of $G_1-\{v_1,v_2\}$ contains a $K_{s}$, all of these edges must be in the other color and, again, our assumption that $K_n-e$ is monochromatic $K_s$-free is violated.
        
    \end{proof}
    
    This produces the following alternative to \Cref{bound theorem}, the proof of which follows exactly along the same lines, imposing only the conditions of \Cref{size lemma}.
    
    \begin{thm}\label{formula theorem}
    
        \begin{equation}\label{ourbound2}
            m_2(s) \leq \max\left\{\min_{i \in \{1,2\}}\left\{\prod_{j=0}^{s-3}\left\lceil\frac{\big(n_i-2\big)-j}{s-2}\right\rceil\right\}\right\}
        \end{equation}
        
        where the maximum is taken over $n_1,n_2 \in \N$ satisfying the following:
        
        \begin{enumerate}[wide, labelindent=0pt]
            \item $n_1,n_2 \geq s$, 
            \item $n_1+n_2\leq R_2(s)+2$, 
            \item $|n_1-n_2|< R(s-1,s)-(s-2)$.
        \end{enumerate}
        
    \end{thm}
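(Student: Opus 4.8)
The plan is to run the argument of \Cref{bound theorem} verbatim in the specialized setting $k=2$ and $s_1=s_2=s$, but to resist the crude collapse $n\geq 2+k(|V(G_{i^*})|-2)$ used there and instead retain the two quantities $n_i:=|V(G_i)|$ as free integer parameters. First I would invoke \Cref{last edge lemma} to produce a $2$-coloring of $K_{R_2(s)}-e$ that is monochromatic $K_s$-free, and form the two subgraphs $G_1,G_2$ exactly as in \Cref{bound theorem}: color the final edge $e$ in the first color (resp.\ the second) and let $G_1$ (resp.\ $G_2$) be the union of the monochromatic $K_s$'s thereby created, together with the incident edges in that color.

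Next I would extract, for the actual values $n_1,n_2$ arising from this construction, the inequality $m_2(s)\leq \min_{i\in\{1,2\}}\prod_{j=0}^{s-3}\lceil\frac{(n_i-2)-j}{s-2}\rceil$. Each of the two extensions is a genuine coloring of $K_{R_2(s)}$, so by the definition of the critical multiplicity it contains at least $m_2(s)$ monochromatic copies of $K_s$; since $K_{R_2(s)}-e$ was $K_s$-free, every such copy passes through $e$ and hence lies in $G_i$. Thus $G_i$ has at least $m_2(s)$ copies of $K_s$, and as $G_i-e$ has none, \Cref{counting inequality} applied to $H=G_i$ gives $m_2(s)\leq \prod_{j=0}^{s-3}\lceil\frac{(n_i-2)-j}{s-2}\rceil$ for each $i$, whence the minimum over $i$.

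It then remains to pin down the feasible region for $(n_1,n_2)$. Condition (1) is immediate, since each $G_i$ contains a $K_s$ and so has at least $s$ vertices; condition (2) is the disjointness observation of \Cref{bound theorem} (the $G_i$ meet only in the two endpoints of $e$, because a vertex outside $e$ cannot lie in both a $K_s$ of the first color and a $K_s$ of the second color through $e$), which gives $n_1+n_2-2\leq R_2(s)$; and condition (3) is precisely the content of \Cref{size lemma}. The true pair $(n_1,n_2)$ produced by the construction is some \emph{unknown} point of the set cut out by (1)--(3), so replacing it by the worst such point yields the advertised universal bound $m_2(s)\leq \max\{\min_{i}\prod_{j=0}^{s-3}\lceil\frac{(n_i-2)-j}{s-2}\rceil\}$.

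The main obstacle is conceptual rather than computational: justifying the passage to the maximum over the feasible set. The point is that the method only certifies that $(n_1,n_2)$ lies in the region defined by (1)--(3), while the bound must hold for every admissible configuration, so the supremum of $\min_i\prod_{j=0}^{s-3}\lceil\frac{(n_i-2)-j}{s-2}\rceil$ over that region is the sharpest conclusion available. The improvement over \Cref{bound theorem} lives entirely in this finer optimization: \Cref{bound theorem} effectively evaluates the product at the balanced half-integer point $(R_2(s)-2)/2$, whereas maximizing over \emph{integer} pairs lets one evaluate at the genuinely admissible integer configuration $|n_1-n_2|\leq 1$, which shaves the ceiling terms whenever $R_2(s)$ is odd. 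I would therefore expect the only real care to be in the integer optimization itself, together with verifying that constraint (3) is compatible with this optimum and, in the authors' envisioned refinements, becomes the lever through which stronger non-extendability statements in the spirit of \Cref{size lemma} can force $(n_1,n_2)$ away from balance and so tighten the bound further.
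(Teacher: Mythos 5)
Your proposal is correct and takes essentially the same approach as the paper: the paper proves \Cref{formula theorem} by rerunning the proof of \Cref{bound theorem} while retaining $n_1=|V(G_1)|$ and $n_2=|V(G_2)|$ as integer parameters, bounded below by $s$, constrained by the disjointness observation $n_1+n_2-2\leq R_2(s)$, and constrained by \Cref{size lemma}, exactly as you do. Your justifications of the individual steps (every monochromatic $K_s$ in an extension passes through $e$, \Cref{counting inequality} applied to each $G_i$, and the passage to the maximum over the feasible region since the true $(n_1,n_2)$ is unknown) coincide with the paper's intended argument.
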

    \begin{proof}
    
        For the two extensions considered in \Cref{size lemma}, set
        $n_i=|V(G_i)|$. Conditions (1) and (2) follow from the construction
        and the overlap argument in the proof of \Cref{bound theorem},
        while condition (3) is precisely \Cref{size lemma}. Applying
        \Cref{counting inequality} to each $G_i$, taking the minimum over
        $i$, and then maximizing over all feasible tuples gives the result.
        
    \end{proof}
    \begin{ex}
    
        Using $R(4,5)=25$ and $R(5,5)\leq 46$ with \eqref{ourbound2}, we get
        $$
            \displaystyle m_2(5) \leq
            \max_{\substack{
                n_1,n_2\in \N,\enskip n_1+n_2\leq 48,\\
                n_1,n_2 \geq 5,\enskip |n_1-n_2|<22
            }}
            \left\{
                \min_{i \in \{1,2\}}
                \left\{
                    \prod_{j=0}^{2}
                    \left\lceil\frac{\big(n_i-2\big)-j}{3}\right\rceil
                \right\}
            \right\}.
        $$
        So, $m_2(5) \leq 392$, which is much better than the classical bound \eqref{classicbound} of $m_2(5) \leq 2,677$ and agrees with the bound provided by \eqref{ourbound}.
        
    \end{ex}
    It is possible to extend this to the multicolor, off-diagonal case as well by extending \Cref{size lemma} as follows.
    
    \begin{lemma}\label{size lemma 2}
        
        Suppose $k \geq 2$ and write $R(s_1,\hdots,s_k)=n$.  Given an edge-coloring of $K_{n}-e$ which avoids a monochromatic $K_{s_t}$ for $t \in \{1,\hdots,k\}$, consider the $k$ possible extensions of this coloring and let $G_i$ be the union of the color-$i$ copies of $K_{s_i}$ containing the final edge in the $i$th extension and the corresponding edges in that color incident to those vertices.  Then, 
        $$
        |V(G_i)|-|V(G_j)|< R\big(s_i-1,R(s_1,\hdots,s_{i-1},s_{i+1},\hdots,s_k)\big)-(s_j-2)
        $$ 
        for all distinct $i,j \in \{1,\hdots,k\}$ with $|V(G_i)|\geq|V(G_j)|$.
        
    \end{lemma}
    \begin{proof}
    
        Write $R(s_1,\hdots,s_k)=n$.  Given an edge-coloring of $K_{n}-e$ which avoids a monochromatic $K_{s_t}$ for $t \in \{1,\hdots,k\}$ (as is possible by \Cref{last edge lemma}), consider the $k$ possible extensions of this coloring and, for each $i\in[k]$, let $S_i$ be the union of the vertex sets of the color-$i$ copies of $K_{s_i}$ containing the final edge in the $i$th extension, and let $G_i$ be the graph on $S_i$ whose edges are all color-$i$ edges with both endpoints in $S_i$. Without loss of generality, assume $|V(G_i)|\geq|V(G_j)|$.  We proceed by way of contradiction and suppose $|V(G_i)|-|V(G_j)|\geq R\big(s_i-1,R(s_1,\hdots,s_{i-1},s_{i+1},\hdots,s_k)\big)-(s_j-2)$.  Since $|V(G_j)| \geq s_j$ (as the contrary would yield that $R(s_1,\hdots,s_k)>n$), this gives $|V(G_i)| \geq R\big(s_i-1,R(s_1,\hdots,s_{i-1},s_{i+1},\hdots,s_k)\big)+2$.  That is, if $v_1,v_2 \in V(G_i)\cap V(G_j)$ are the two vertices incident to the distinguished edge $e$, then $|V(G_i)-\{v_1,v_2\}| \geq R\big(s_i-1,R(s_1,\hdots,s_{i-1},s_{i+1},\hdots,s_k)\big)$.  Accordingly, either $G_i-\{v_1,v_2\}$ contains a monochromatic $K_{s_i-1}$ or, with the original colors on its edges retained, its edge-wise complement contains a monochromatic $K_{s_t}$ for some $t\in[k]\setminus\{i\}$.  But if $G_i-\{v_1,v_2\}$ contains a monochromatic $K_{s_i-1}$, then $G_i-v_1$ (as well as $G_i-v_2$, if you prefer) contains a monochromatic $K_{s_i}$, contradicting our initial assumption that $K_n-e$ contains no color-$t$ copy of $K_{s_t}$ for any $t\in\{1,\hdots,k\}$.  Likewise, if the edge-wise complement of $G_i-\{v_1,v_2\}$ contains a monochromatic $K_{s_t}$ for some $t\in \{1,\hdots,k\}\setminus\{i\}$, all of these edges must be in colors other than the $i$th color and, again, our assumption that $K_n-e$ contains no color-$t$ copy of $K_{s_t}$ for any $t\in\{1,\hdots,k\}$ is violated.
        
    \end{proof}
    
    In the diagonal, two-color case, this says $\bigl||V(G_1)|-|V(G_2)|\bigr|<R\big(s-1,R(s)\big)-(s-2)$ and, since $R(s)=s$, this becomes the statement of \Cref{size lemma}.  In the multicolor case, this immediately strengthens \Cref{bound theorem}, the proof of which follows exactly along the same lines as \Cref{bound theorem}, imposing only the conditions of \Cref{size lemma 2}.
    
    \begin{thm}\label{formula theorem2}
        For $k \geq 2$ and $s_i \geq 3$ for every $i \in [k]$,
        $$
            m(s_1,\hdots,s_k) \leq \max\left\{\min_{i \in \{1,\hdots,k\}}\left\{\prod_{j=0}^{s_i-3}\left\lceil\frac{\big(n_i-2\big)-j}{s_i-2}\right\rceil\right\}\right\}
        $$
        
        where the maximum is taken over $n_1,\hdots,n_k \in \N$ satisfying the following:
        
        \begin{enumerate}[wide, labelindent=0pt]
            \item $n_i \geq s_i$ for every $i\in[k]$, 
            \item $n_1+\hdots+n_k\leq R(s_1,\hdots,s_k)+2(k-1)$, 
            \item For every ordered pair of distinct indices $(i,j)$ satisfying $n_i\geq n_j$, $$ n_i-n_j<R\left(s_i-1,R(s_1,\ldots,s_{i-1},s_{i+1},\ldots,s_k)\right)-(s_j-2).$$
        \end{enumerate}
        
    \end{thm}
    \begin{proof}
        For the $k$ extensions considered in \Cref{size lemma 2}, set $n_i=|V(G_i)|$. Condition (1) holds because each extension contains a color-$i$ copy of $K_{s_i}$, condition (2) follows from the overlap argument in the proof of \Cref{bound theorem}, and condition (3) is precisely \Cref{size lemma 2}. Applying \Cref{counting inequality} to each $G_i$, taking the minimum over $i$, and then maximizing over all feasible tuples gives the result.
    \end{proof}
    \begin{ex}\label{off diag ex}

        For each $i\in[k]$, write
        $$
            A_i=
            R\left(
                s_i-1,
                R(s_1,\ldots,s_{i-1},s_{i+1},\ldots,s_k)
            \right).
        $$
        Using \Cref{formula theorem2} and known values and upper bounds for
        small Ramsey numbers \cite{Radziszowski2026} gives the bounds recorded
        in \Cref{table:computation-details}.

        \begin{table}[hpt!]
            \centering
            \small
            \begin{tabularx}{\textwidth}{|c|c|C|c|c|}
                \hline
                Parameters
                & $R(s_1,\ldots,s_k)$
                & $(A_1,\ldots,A_k)$
                & Maximizing tuple
                & Bound
                \\\hline\hline
                $(3,3,3)$
                & $17$
                & $(6,6,6)$
                & $(7,7,7)$
                & $5$
                \\\hline
                $(3,3,3,3)$
                & $\leq62$
                & $(17,17,17,17)$
                & $(17,17,17,17)$
                & $15$
                \\\hline
                $(3,3,4)$
                & $30$
                & $(9,9,18)$
                & $(12,12,10)$
                & $10$
                \\\hline
                $(3,3,5)$
                & $\leq57$
                & $(14,14,40)$
                & $(23,23,13)$
                & $21$
                \\\hline
                $(3,3,6)$
                & $\leq91$
                & $(18,18,85)$
                & $(36,36,23)$
                & $34$
                \\\hline
            \end{tabularx}
            \caption{Ramsey-number inputs and one maximizing feasible tuple
            for each application of \Cref{formula theorem2}. The entries
            $62$, $57$, $91$, $40$, and $85$ are certified upper bounds;
            the remaining inputs are exact.}
            \label{table:computation-details}
        \end{table}
        For the $(3,3,4)$ case, we use the exact value $R(3,3,4)=R(4,3,3)=30$, established by Codish, Frank, Itzhakov, and Miller \cite{CodishFrankItzhakovMiller2016}.  Here and below, replacing an unknown Ramsey number by a certified upper bound only enlarges the feasible region in \Cref{formula theorem2}, and therefore yields a valid, possibly weaker, upper bound. In particular, the first of these inequalities recovers the exact value $m(3,3,3)=5$ proved by Sane and Wallis \cite{SaneWallis1988}.  For comparison with previously published finite-threshold values, see \Cref{table:known-multicolor-comparison} below.
        \begin{table}[hpt!]
            \centering
            \begin{tabularx}{\textwidth}{|c|C|c|}\hline
                Parameters
                & Previously published finite-threshold value
                & \Cref{formula theorem2}
                \\\hline\hline
                $(3,4)$ & $m(3,4)=1$ \cite{KhadzhiivanovNenov1978}
                        & $m(3,4)\leq 3$ \\\hline
                $(3,5)$ & $m(3,5)=4$ \cite{Pashov1984}
                        & $m(3,5)\leq 6$ \\\hline
                $(3,6)$ & $m(3,6)=2$ \cite{KhadshiivanovPashov1992}
                        & $m(3,6)\leq 8$ \\\hline
                $(3,3,3)$ & $m(3,3,3)=5$ \cite{SaneWallis1988}
                          & $m(3,3,3)\leq 5$ \\\hline
                $(3,3,4)$ & --- & $m(3,3,4)\leq 10$ \\\hline
            \end{tabularx}
            \caption{Comparison of several known finite-threshold multiplicities with the bounds supplied by \Cref{formula theorem2}. We are unaware of a previously published bound specific to $m(3,3,4)$.}
            \label{table:known-multicolor-comparison}
        \end{table}
        
    \end{ex}
\section{A Route for Improvement}
    
    We can extend the result of \Cref{bound theorem} (as well as the preceding results, through suitable modifications) using a stronger form of \Cref{counting inequality}.  The proof is similar.

    \begin{cor}\label{expanded counting inequality}
    
        Let $s \geq 3$ and let $H$ be a graph such that (i) $H$ contains a total of $t$ distinct copies of $K_s$ (i.e. subgraphs $H_1,\hdots,H_t$ each isomorphic to $K_s$ such that $H_i \neq H_j$ for $i \neq j$); and (ii) there exists a subgraph $G \leq H$ such that, for every copy $Q$ of $K_s$ in $H$, there is a unique edge $e\in E(G)$ satisfying $V(Q)\cap V(G)=V(e)$, and such that every edge of $G$ belongs to at least one copy of $K_s$ in $H$.  Then, 
        
        $$ 
            |E(G)| \leq t \leq |E(G)|\prod_{j=0}^{s-3}\left\lceil \frac{\left(|V(H)|-|V(G)|\right)-j}{s-2}\right\rceil.
        $$
        
    \end{cor}
    \begin{proof}
    
        Let $H$ be any graph satisfying $(i)$ and let $G \leq H$ satisfy $(ii)$. For each edge $e=uv\in E(G)$, let $C_{uv}$ denote the subset of vertices in $V(H)\setminus V(G)$ adjacent to both $u$ and $v$.  Let $H[C_{uv}]$ denote the subgraph of $H$ induced by $C_{uv}$.  There is then a bijection between the copies of $K_s$ in $H$ containing $e=uv$ and the copies of $K_{s-2}$ in $H[C_{uv}]$.  Moreover, $H[C_{uv}]$ is $K_{s-1}$-free, since a copy of $K_{s-1}$ in $H[C_{uv}]$, together with $u$ (or $v$, if you prefer), would be a copy of $K_s$ in $H$ containing no edge of $G$.  So, by Zykov's theorem \cite{Zykov1949}, $H[C_{uv}]$ contains at most the number of $K_{s-2}$'s in the Tur\'an graph $T(|V(H[C_{uv}])|,s-2)$.  But summing the number of $K_{s-2}$'s in $H[C_{uv}]$ over $e=uv\in E(G)$ yields $t$, as each copy of $K_s$ in $H$ contains exactly one edge of $G$ and no other vertices of $G$.  Moreover, $|E(G)|\leq t$, since each edge of $G$ belongs to at least one copy of $K_s$ in $H$.  Since $|C_{uv}|\leq |V(H)|-|V(G)|$, we then get
        $$|E(G)|\leq t \leq \sum_{uv\in E(G)}\prod_{j=0}^{s-3}\left\lceil\frac{|C_{uv}|-j}{s-2}\right\rceil\leq |E(G)|\prod_{j=0}^{s-3}\left\lceil\frac{|V(H)|-|V(G)|-j}{s-2}\right\rceil.$$
        
    \end{proof}

    To state the general form of the extended \Cref{bound theorem}, we require a definition.

    \begin{defi}\label{cleanly saturated coloring}
    
        Let $\mathbf{s}=(s_1,\ldots,s_k)$, let $G\leq K_n$ have no isolated vertices, and let
        $$
            \chi:E(K_n)\setminus E(G)\longrightarrow [k]
        $$
        be a $k$-edge-coloring.  For each $i\in[k]$, let $\chi_i$ denote the extension of $\chi$ to $K_n$ obtained by assigning color $i$ to every edge of $G$.  We say that $\chi$ is \emph{cleanly $\mathbf{s}$-saturated} with respect to $G$ if the following conditions hold:
    
        \begin{enumerate}
            \item $\chi$ contains no color-$i$ copy of $K_{s_i}$ for any
            $i\in[k]$;
    
            \item for every $i\in[k]$ and every edge $e\in E(G)$, the edge $e$ belongs to at least one color-$i$ copy of $K_{s_i}$ in $\chi_i$;
            \item for every $i\in[k]$ and every color-$i$ copy
            $Q\cong K_{s_i}$ in $\chi_i$, there exists an edge $e\in E(G)$ such
            that
            $$
            V(Q)\cap V(G)=V(e).
            $$
        \end{enumerate}
    
    \end{defi}
    \Cref{cleanly saturated coloring} formalizes the property of the coloring considered in \Cref{last edge lemma}.  In this language, \Cref{last edge lemma} says that if $R(s_1,\hdots,s_k)=n$, then there is a $K_2$ subgraph of $K_n$ (i.e. an edge $e$) and a $k$-edge-coloring $\chi$ of $E(K_n) \setminus \{e\}$ that is cleanly $(s_1,\ldots,s_k)$-saturated with respect to this edge.  Using this coloring, we were able to produce the bound on $m(s_1,\hdots,s_k)$ in \Cref{bound theorem}.  We will now create a similar bound for larger subgraphs $G\leq K_n$ and cleanly $\mathbf{s}$-saturated colorings of $E(K_n)\setminus E(G)$.  Recall also that a \emph{matching} of a graph $G$ is a set of pairwise non-incident edges, a \emph{maximum matching} is a matching with the largest possible number of edges, and the \emph{matching number} of $G$, denoted $\nu(G)$, is the size of a maximum matching.
    \begin{thm}\label{bound theorem m version}
    
        Let $n=R(s_1,\ldots,s_k)$, where $s_i \geq 3$ for $1 \leq i \leq k$. Suppose that there exist a graph $G\leq K_n$ and a coloring $\chi$ of $E(K_n)\setminus E(G)$ that is cleanly $(s_1,\ldots,s_k)$-saturated with respect to $G$.  Then, 
        
        \begin{equation}\label{ourbound m version}         
            m(s_1,\ldots,s_k) \leq \max_{\substack{(n_1,\ldots,n_k)\in\mathbb N^k\\s_i-2\leq n_i\leq n-|V(G)|\text{ for every }i\in[k]\\ n_1+\cdots+n_k \leq \bigl(n-|V(G)|\bigr)\min\{k,\nu(G)\}}}\min_{1\leq i\leq k}\left\{|E(G)|\prod_{j=0}^{s_i-3}\left\lceil\frac{n_i-j}{s_i-2}\right\rceil\right\}
        \end{equation}
        
        Likewise, we have the weaker bound
        
        \begin{equation}\label{ourbound m version weak case}     
            m(s_1,\hdots,s_k)\leq \max_{1\leq i\leq k}|E(G)|\prod_{j=0}^{s_i-3}\left\lceil \frac{\left\lfloor \frac{\min\{k,\nu(G)\}(n-|V(G)|)}{k} \right\rfloor-j}{s_i-2}\right\rceil.
        \end{equation}
        
    \end{thm}
    \begin{proof}
    
        Let $n=R(s_1,\ldots,s_k)$ and suppose that there exist a graph $G\leq K_n$ and a coloring $\chi$ of $E(K_n)\setminus E(G)$ that is cleanly $(s_1,\ldots,s_k)$-saturated with respect to $G$.  Consider the $k$ possible extensions $\chi_1,\hdots,\chi_k$ of this coloring to $K_n$ obtained by coloring all of the edges $E(G)$ with color $i$. Each $\chi_i$ must contain some number $t_i \geq m(s_1,\hdots,s_k)\geq 1$ of copies of $K_{s_i}$ by \Cref{ramsey def,crit mult def}.  Moreover, these are the only occurrences of such monochromatic subgraphs in $\chi_i$:  a color-$j$ copy of $K_{s_j}$ with $j\neq i$ cannot appear in $\chi_i$, as such a copy cannot contain an edge of $G$ (because every edge of $G$ has color $i$) and hence would already occur in the target-free coloring $\chi$.  For each $\chi_i$, let $H_i$ be the corresponding subgraph of $K_{n}$ consisting of the union of the $t_i$ monochromatic copies of $K_{s_i}$ contained in $\chi_i$. Note that each $H_i$ contains $G$ itself, as \Cref{cleanly saturated coloring} implies that every edge of $G$ belongs to at least one monochromatic copy of $K_{s_i}$ in $\chi_i$.  Likewise, by \Cref{cleanly saturated coloring}, each $H_i$ satisfies the conditions of \Cref{expanded counting inequality} with respect to $G$. So, taking the minimum over $i\in [k]$, we get 
        
        \begin{equation}\label{hbound}
            m(s_1,\hdots,s_k)\leq \min_{1 \leq i \leq k}|E(G)|\prod_{j=0}^{s_i-3}\left\lceil \frac{\left(|V(H_i)|-|V(G)|\right)-j}{s_i-2}\right\rceil.
        \end{equation}
        
        Note now that $s_i-2 \leq |V(H_i)|-|V(G)|\leq n-|V(G)|$, since $H_i \leq K_n$ for the latter inequality and, for the former, since $H_i$ contains at least one $K_{s_i}$ that has but one edge in $G$ by \Cref{cleanly saturated coloring}.  Note also that if $J\subseteq [k]$ is such that a vertex $v \in V(K_n)$ satisfies $v \in V(H_j)\setminus V(G)$ for all $j \in J$, then this imposes that there must be at least $|J|$ pairwise non-incident edges in $G$. To see this, for each $j\in J$, choose a monochromatic $K_{s_j}$ containing $v$ and let $e_j$ be its unique edge in $G$. Were a pair of such edges $e_{j_1}$ and $e_{j_2}$ adjacent in $G$, then their common incident vertex $w \in V(G)$ would imply that the edge $vw\in E(H_{j_1}) \cap E(H_{j_2})$ must be colored both $j_1$ and $j_2$. But $vw\notin E(G)$, so its color is fixed by $\chi$, a contradiction.  Accordingly, such a subset $J$ must satisfy $|J| \leq \nu(G)$ and, likewise, $|J| \leq k$.  Hence, each vertex $v \in V(K_n)\setminus V(G)$ belongs to at most $\min\left\{k,\nu(G)\right\}$ of the graphs $H_1,\hdots,H_k$.  Accordingly, we get
        
        \begin{equation}\label{sumbound}
            \sum_{i=1}^k \big( |V(H_i)|-|V(G)| \big)\leq \left(n-|V(G)|\right) \min\left\{k,\nu(G)\right\}.
        \end{equation}

        The tuple $(n_1,\hdots,n_k)$ defined by $n_i=|V(H_i)|-|V(G)|$ is feasible by the preceding inequalities. Hence, \eqref{hbound} gives the first desired claim:
        
        $$
            m(s_1,\ldots,s_k) \leq \max_{\substack{(n_1,\ldots,n_k)\in\mathbb N^k\\s_i-2\leq n_i\leq n-|V(G)|\text{ for every }i\in[k]\\ n_1+\cdots+n_k \leq \bigl(n-|V(G)|\bigr)\min\{k,\nu(G)\}}}\min_{1\leq i\leq k}\left\{|E(G)|\prod_{j=0}^{s_i-3}\left\lceil\frac{n_i-j}{s_i-2}\right\rceil\right\}.
        $$

        For the second bound, \eqref{sumbound} implies that, for at least one $i \in [k]$, the quantity $|V(H_i)|-|V(G)|$ must satisfy
        
        $$
            |V(H_i)|-|V(G)| \leq \left\lfloor \frac{\min\{k,\nu(G)\}(n-|V(G)|)}{k} \right\rfloor,
        $$
        as the contrary would yield the contradiction
        
        \begin{align*}
            \sum_{i=1}^k \big( |V(H_i)|-|V(G)| \big)
            &\geq k\left(\left\lfloor \frac{\min\{k,\nu(G)\}(n-|V(G)|)}{k} \right\rfloor+1\right)\\
            &> \min\{k,\nu(G)\}(n-|V(G)|).
        \end{align*}
        
        Since the relevant product is nondecreasing in $|V(H_i)|-|V(G)|$, \eqref{hbound} yields 
        $$
            m(s_1,\hdots,s_k)\leq \max_{1 \leq i \leq k}|E(G)|\prod_{j=0}^{s_i-3}\left\lceil \frac{\left\lfloor \frac{\min\{k,\nu(G)\}(n-|V(G)|)}{k} \right\rfloor-j}{s_i-2}\right\rceil.
        $$
        
    \end{proof}

    \begin{remark}
    
        In the case where $G=K_2$, we get $|E(G)|=1$, $|V(G)|=2$, and $\nu(G)=1$, producing (via the weak bound \eqref{ourbound m version weak case})
        
        $$
            m(s_1,\hdots,s_k) \leq \max_{1\leq i \leq k}\prod_{j=0}^{s_i-3}\left\lceil \frac{\left\lfloor \frac{(R(s_1,\hdots,s_k)-2)}{k} \right\rfloor-j}{s_i-2}\right\rceil 
        $$
        Thus, \Cref{bound theorem m version} is an extension of \Cref{bound theorem}.
        
    \end{remark}

    \begin{remark}
    
        The matching-number constraint is most favorable when $\nu(G)=1$, the smallest possible value for a nonempty graph $G$. In this case, the sets $V(H_i)\setminus V(G)$ are pairwise disjoint. Moreover, if $G$ is a forest without isolated vertices, then $\nu(G)=1$ if and only if $G$ is a star. This does not by itself guarantee the best numerical bound, since increasing the size of the star trades a smaller value of $n-|V(G)|$ against the larger prefactor $|E(G)|$ in \eqref{ourbound m version}.
        
    \end{remark}
    
\section{Concluding Remarks}\label{conclude}

    In \Cref{last edge lemma}, we show that there is a partial coloring of $K_{R(s_1,\ldots,s_k)}$ that is cleanly $(s_1,\ldots,s_k)$-saturated with respect to a single uncolored edge.  Combined with the counting argument, this yields the bounds in \Cref{bound theorem,formula theorem2}. In the two-color diagonal case, these bounds improve the elementary probabilistic estimate for $5\leq s\leq8$, while in the multicolor case \Cref{formula theorem2} recovers $m(3,3,3)=5$ and yields $m(3,3,4)\leq10$.

    \Cref{bound theorem m version} shows more generally that producing a partial coloring that is cleanly $(s_1,\ldots,s_k)$-saturated with respect to its uncolored subgraph yields a bound on $m(s_1,\ldots,s_k)$. This provides a framework for further improvements. In particular, it remains to find a cleanly saturated coloring with respect to a graph $G\neq K_2$ that improves the numerical bounds obtained from the one-edge construction.

\bibliographystyle{amsplain}
\bibliography{ref}
\section*{Statements and Declarations}
\noindent\textbf{Conflict of Interest Statement}: The authors declare none. \\\textbf{Funding Statement}: This work received no specific grant from any funding agency, commercial or not-for-profit sectors.  \\\textbf{Data Availability Statement}: Data sharing is not applicable to this article, as no datasets were generated or analyzed during the current study.
\end{document}